\newtheorem{Definition}{Definition}[section]
\newtheorem*{theorem*}{Theorem}
\newtheorem*{corollary*}{Corollary}
\newtheorem{Lemma}[Definition]{Lemma}
\definecolor{darkred}{rgb}{1, 0.1, 0.3}
\definecolor{darkblue}{rgb}{0.1, 0.1, 1}
\definecolor{darkgreen}{rgb}{0,0.6,0.5}
\newcommand {\mm}[1] {\ifmmode{#1}\else{\mbox{\(#1\)}}\fi}
\newcommand{\C}{\mathbb{C}} 
\newcommand{\cp}{\mathbb{C}\mathrm{P}} 
\newcommand{\R}{\mathbb{R}} 
\newcommand{\Z}{\mathbb{Z}} 
\newcommand{\hz}{\mathrm{HZ}}
\newcommand{\HH}{\mathcal{H}}
\newcommand{\dd}{\mathrm{d}}
\begin{document}

\title{Hofer--Zehnder capacity of magnetic disc tangent bundles over constant curvature surfaces}
 
\author{{Johanna Bimmermann}}

\maketitle

\begin{abstract}
\noindent
We compute the Hofer--Zehnder capacity of magnetic disc tangent bundles over constant curvature surfaces. We use the fact that the magnetic geodesic flow is totally periodic and can be reparametrized to obtain a Hamiltonian circle action. The oscillation of the Hamiltonian generating the circle action immediately yields a lower bound of the Hofer--Zehnder capacity. The upper bound is obtained from Lu's bounds of the Hofer--Zehnder capacity using the theory of pseudo-holomorphic curves. In our case the gradient spheres of the Hamiltonian $H$ will give rise to the non-vanishing Gromov--Witten invariant. 
\end{abstract}

\section{Introduction and main results}
The notion of symplectic capacities was developed to investigate the existence of symplectic embeddings. As symplectomorphisms are always volume preserving one could ask whether a symplectic embedding $M\hookrightarrow N$ exists if and only if there exists a smooth volume preserving embedding. The answer is no (in dimension larger than two) and was given by Gromov in 1985 with his non-squeezing theorem \cite{Gr85}. This means that there must be more global symplectic invariants than volume. A class of such invariants is given by symplectic capacities as introduced by H. Hofer and E. Zehnder in \cite{HZ94}. There, they constructed a special capacity, now known as the Hofer--Zehnder capacity, relating embedding problems with the dynamics on symplectic manifolds. Very importantly, its finiteness implies the existence of periodic orbits on almost all compact regular energy levels (\cite[Ch.\ 4]{HZ94}).
\begin{Definition}
Let $(M,\omega)$ be a symplectic manifold possibly with boundary $\partial M$. We call a smooth Hamiltonian function $H:M\to\R$ \textit{admissible} if there exists a compact subset $K\subset M\setminus \partial M$ and a non-empty open subset $U\subset K$ such that
\begin{itemize}
    \item[a)] 
    $
    H\vert_{M\setminus K}=\max H\ \ \text{and}\ \ H\vert_{U}=0,
    $
    \item[b)] $0\leq H(x)\leq \max H$ for all $x\in M$.
\end{itemize}
Denote by $\mathcal{H}(M)$ the set of admissible functions and by $\mathcal{P}_{\leq 1}(H)$ the set of non-constant periodic orbits with period at most one. The Hofer--Zehnder capacity of the symplectic manifold $(M,\omega)$ is then defined as
$$
c_{\hz}(M,\omega):=\sup\lbrace \max H\ \vert\ H\in\HH(M), \mathcal{P}_{\leq 1}(H)=\emptyset \rbrace.
$$
\end{Definition}
\noindent
In this note we consider the following set up. Let $(\Sigma,g)$ be a closed oriented Riemannian surface of constant curvature $\kappa$. Denote by $\lambda$ the pullback via the metric isomorphism of the canonical 1-form on $T^*\Sigma$. Further denote by $\sigma\in \Omega_2(\Sigma)$ the Riemannian area form. We now study for some $r>0$ the disc tangent bundle
$$
D_r\Sigma=\lbrace (x,v)\in T\Sigma\vert \ g_x(v,v)<r^2\rbrace
$$
and equip it with magnetically twisted symplectic form
$$
\omega_s:=\dd\lambda-s\pi^*\sigma
$$
for some real parameter $s\neq 0$. We call $s$ the strength of the magnetic field. Our main (and only) theorem gives the value of the Hofer--Zehnder capacity for a certain range of $s,r$.
\begin{theorem*}
Let $(\Sigma,g)$ be a closed oriented Riemannian surface of constant curvature $\kappa$. Denote $\sigma$ the corresponding area form and pick two real parameters $r>0$ and $s\neq 0$ satisfying $s^2+\kappa r^2>0$. Then 
$$
c_{\hz}(D_r\Sigma,\dd\lambda-s\pi^*\sigma)=\frac{2\pi}{\kappa}\left (\sqrt{s^2+\kappa r^2}-\vert s\vert\right).
$$
\end{theorem*}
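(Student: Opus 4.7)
The plan is to bracket $c_{\hz}(D_r\Sigma,\omega_s)$ by matching upper and lower bounds, both produced from the Hamiltonian circle action obtained by reparametrizing the magnetic geodesic flow. First I would verify that the kinetic Hamiltonian $H(x,v)=\tfrac12 g_x(v,v)$ generates, with respect to $\omega_s$, the magnetic geodesic flow, and that on a surface of constant curvature $\kappa$ every non-constant orbit at energy $E$ is periodic with period
$$
T(E)=\frac{2\pi}{\sqrt{s^2+2\kappa E}}.
$$
The hypothesis $s^2+\kappa r^2>0$ keeps the radicand positive on $\overline{D_r\Sigma}$. Setting $K=f(H)$ with $f'(E)=T(E)$ and $f(0)=0$ gives a smooth function on $D_r\Sigma$ whose Hamiltonian flow is a free $S^1$-action of period one off the zero section, and whose range is $[0,\Delta)$ with
$$
\Delta:=\int_0^{r^2/2}T(E)\,\dd E=\frac{2\pi}{\kappa}\bigl(\sqrt{s^2+\kappa r^2}-|s|\bigr),
$$
the claimed value of $c_{\hz}$.

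For the lower bound I would use $K$ to build an admissible test function. Fix $\epsilon>0$ and pick a smooth nondecreasing $\phi\colon[0,\Delta)\to\R$ that vanishes near $0$, equals $\Delta-\epsilon$ near $\Delta$, and satisfies $0\leq\phi'<1$. The composition $F=\phi\circ K$ then lies in $\HH(D_r\Sigma)$: it vanishes on a neighborhood of the zero section and is constant outside the compact set $\{K\leq b\}$ for some $b<\Delta$. Any non-constant periodic orbit of $X_F=\phi'(K)X_K$ has period $1/\phi'(K)>1$, so $\mathcal{P}_{\leq 1}(F)=\emptyset$ and $\max F=\Delta-\epsilon$. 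Letting $\epsilon\to 0$ gives $c_{\hz}(D_r\Sigma,\omega_s)\geq \Delta$.

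For the upper bound I would appeal to Lu's theorem. The plan is to perform a symplectic cut of $D_r\Sigma$ along the $S^1$-invariant boundary level, producing a closed symplectic manifold $(\bar M,\bar\omega)$ into which $D_r\Sigma$ embeds as an open subset, together with an extension of the $S^1$-action whose moment map $\bar K$ has two critical submanifolds (both copies of $\Sigma$): the zero section at the minimum and the cut locus at the maximum. For an $S^1$-invariant compatible almost complex structure, the closures of the gradient trajectories of $\bar K$ joining these critical loci are $J$-holomorphic spheres of symplectic area $\Delta$, and generically exactly one such sphere passes through each point of $\bar M$. This geometric transversality pins down a non-vanishing genus-zero Gromov--Witten invariant in the fibre class, and Lu's estimate then yields $c_{\hz}(\bar M,\bar\omega)\leq\Delta$. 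Monotonicity of $c_{\hz}$ under open symplectic embeddings transfers the bound to $D_r\Sigma$.

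The main obstacle is the upper bound. It requires verifying that the symplectic cut produces a smooth closed manifold (which uses freeness of the $S^1$-action on the boundary level, equivalent to the nondegeneracy assumption $s^2+\kappa r^2>0$), identifying $(\bar M,\bar\omega)$ concretely enough in the three curvature regimes (a Hirzebruch-type surface for $\kappa>0$, and $\cp^1$-bundles over the torus or a higher-genus surface for $\kappa\leq 0$) that its cohomology is computable, and checking that the resulting GW invariant fits the hypotheses of Lu's theorem, in particular the appropriate semi-positivity or virtual moduli framework. The $S^1$-equivariance makes the enumeration of gradient spheres essentially combinatorial, but the transversality analysis is where the real care is needed.
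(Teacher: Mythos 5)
Your proposal follows the paper's proof essentially verbatim: the same reparametrization $h(E)=\tfrac{2\pi}{\kappa}\bigl(\sqrt{s^2+2\kappa E}-|s|\bigr)$ turning the magnetic flow into a period-one circle action, the same cut-off construction for the lower bound, and the same Lerman cut with gradient spheres producing a non-vanishing Gromov--Witten invariant to which Lu's theorem applies for the upper bound. The only step you leave implicit that the paper treats separately is that the period-one (semifree) claim survives passage from the universal cover to a non-simply-connected $\Sigma$ --- a magnetic circle could a priori multiply cover its projection --- which the paper excludes by observing that such a multiple cover would force a deck transformation in $\Gamma$ to fix the circle's center, contradicting freeness of the $\Gamma$-action.
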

\noindent
The theorem covers three types of surfaces: spheres ($\kappa>0$), flat tori ($\kappa=0$)\footnote{The capacity for $\kappa=0$ is the limit $\lim_{\kappa\to 0}\frac{2\pi}{\kappa}\left (\sqrt{s^2+\kappa r^2}-\vert s\vert\right)=\frac{\pi r^2}{\vert s\vert}$.} and hyperbolic surfaces $(\kappa< 0)$.
The assumption $s^2+\kappa r^2>0$ does not put any additional constraint on spheres and flat tori. For hyperbolic surfaces it tells us to look at strong magnetic fields, i.e. $\vert s\vert >\sqrt{-\kappa}\lambda$.\\
\ \\
\noindent
There are some modifications of the Hofer--Zehnder capacity we will also discuss. First one can look at this capacity with respect to a fixed free homotopy class of loops $\nu$. We denote
$$
\mathcal{P}_T(H;\nu):=\lbrace \gamma\in C^\infty(\R/T\Z, M)\ \vert \ \dot\gamma(t)=X_H(\gamma(t))\neq 0;\ [\gamma]=\nu\rbrace
$$
the set of non-constant $T$-periodic solutions to the Hamiltonian equations in the class $\nu$ and by $\mathcal{P}_{\leq T}(M,\nu)$ the set of non-constant periodic solutions in class $\nu$ with period less or equal to $T$. The Hofer--Zehnder capacity with respect to this free homotopy class is defined to be
$$
c_{\hz}^\nu(M,\omega):=\sup\lbrace \max H\ \vert\ H\in \HH(M), \mathcal{P}_{\leq 1}(H;\nu)=\emptyset\rbrace.
$$
Second there is a relative version, considering only Hamiltonians constant along a subset $Z\subset M$ not touching the boundary. It is defined as follows. Denote by $\HH(M,Z)$ the set of smooth functions satisfying 
\begin{itemize}
    \item[a)] 
    $
    H\vert_{M\setminus K}=\max H\ \ \text{and}\ \ H\vert_{U}=0,
    $
    \item[b)] $0\leq H(x)\leq \max H$ for all $x\in M$,
\end{itemize}
for an open neighborhood $U\supset Z$ and a compact set $K\supset U$. The relative Hofer--Zehnder capacity is then defined as 
$$
c_{\mathrm{HZ}}(M, Z, \omega):=\sup\lbrace \max H\ \vert\ H\in\HH(M,Z), \mathcal{P}_{\leq 1}(H)=\emptyset\rbrace.
$$
Observe that from the definitions it follows directly that 
$$
c_{\hz}(M,Z,\omega)\leq c_{\hz}(M,\omega)\leq c_{\hz}^0(M,\omega).
$$
Proving the theorem it becomes clear that the Hamiltonian we use for the lower bound is actually constant along the zero-section, thus also bounds the relative capacity from below. In addition the upper bound comes from pseudo-holomorphic spheres, therefore it detects contractible orbits which means we actually bound the $\pi_1$-sensitive capacity from above. Thus the following corollary follows.
\begin{corollary*} The $\pi_1$-sensitive and the relative Hofer--Zehnder capacity agree, i.e.
$$
    c_{\hz}(D_r\Sigma,\Sigma,\omega_s)=c_{\hz}(D_r\Sigma,\omega_s)=c_{\hz}^0(D_r\Sigma,\omega_s).
$$    
\end{corollary*}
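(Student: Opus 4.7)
The plan is to establish the corollary as a consequence of the proof of the main theorem, by simply squeezing between the two inequalities
\[
c_{\hz}(D_r\Sigma,\Sigma,\omega_s)\leq c_{\hz}(D_r\Sigma,\omega_s)\leq c_{\hz}^0(D_r\Sigma,\omega_s),
\]
which hold by definition (any $H\in\HH(M,Z)$ is automatically in $\HH(M)$, and the $\pi_1$-sensitive capacity allows strictly fewer forbidden orbits). It therefore suffices to show that the lower bound used in the main theorem already bounds the relative capacity from below, and that the upper bound already bounds the $\pi_1$-sensitive capacity from above.

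For the lower bound, I will inspect the explicit Hamiltonian $H$ built in the proof of the main theorem. This $H$ is constructed from the Hamiltonian generating the circle action obtained by reparametrizing the periodic magnetic geodesic flow, and its minimum set contains the zero-section $\Sigma\subset D_r\Sigma$ (the fixed point set of the circle action, where the radial function vanishes). Thus $H$ is constant on an open neighborhood of $\Sigma$, so $H\in\HH(D_r\Sigma,\Sigma)$ and the same admissible function witnesses the lower bound
\[
\tfrac{2\pi}{\kappa}\!\left(\sqrt{s^2+\kappa r^2}-|s|\right)\leq c_{\hz}(D_r\Sigma,\Sigma,\omega_s).
\]

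For the upper bound, I will revisit Lu's argument: the existence of a non-vanishing Gromov--Witten invariant produces, for any admissible $H$ with sufficiently large oscillation, a non-constant periodic orbit that arises as a limit of pseudo-holomorphic \emph{spheres}. Such an orbit is contractible, i.e.\ lies in the free homotopy class $\nu=0$. Consequently the resulting estimate rules out admissibility not just in $\HH(D_r\Sigma)$ but already in $\HH(D_r\Sigma)$ with no contractible periodic orbits, yielding
\[
c_{\hz}^0(D_r\Sigma,\omega_s)\leq \tfrac{2\pi}{\kappa}\!\left(\sqrt{s^2+\kappa r^2}-|s|\right).
\]
Combined with the chain of inequalities above, both outer capacities are pinched to the value of the theorem, proving the corollary.

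The only genuine points to verify, which I expect to be routine once the proof of the main theorem is in hand, are that the constructed Hamiltonian is indeed constant on a neighborhood of the zero-section (so that it belongs to $\HH(D_r\Sigma,\Sigma)$ and not merely to $\HH(D_r\Sigma)$) and that the compactness argument in Lu's bound delivers a contractible orbit rather than one in an arbitrary class; both follow from the explicit circle-action description of the flow and the fact that the Gromov--Witten count is carried by genuine pseudo-holomorphic spheres.
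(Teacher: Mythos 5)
Your proposal is correct and follows essentially the same route as the paper: squeeze the full capacity between the relative and $\pi_1$-sensitive ones via the definitional chain of inequalities, then observe that the admissible Hamiltonian $\tilde H=f\circ H$ (with $f\equiv 0$ near $0$) vanishes on a neighborhood of the zero-section and hence lies in $\HH(D_r\Sigma,\Sigma)$, while Lu's pseudo-holomorphic-sphere argument detects a contractible orbit and therefore already bounds $c_{\hz}^0$ from above. This is exactly how the paper deduces the corollary.
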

\noindent
This example also shows that the $\pi_1$-sensitive capacity is not continuous on all smooth families of domains bounded by smooth hypersurfaces, a question raised by Cieliebak, Hofer, Latschev and Schlenk in \cite[Prob.\ 7]{CHLS}. Indeed for closed hyperbolic surfaces $(\kappa=-1)$ we find that
$$
c_{\hz}^0(D_r\Sigma,\omega_1)=\left\{\begin{array}{ll} 2\pi(1-\sqrt{1-r^2})\ & \text{for}\quad   r\leq 1\\
         \infty \  &\text{for}\quad  r>1 \end{array}\right.
$$
is not continuous in $r$. It jumps precisely at the Ma\~{n}\'{e} critical value \cite[Sec.\ 5.2]{CFP10}. To see that the $\pi_1$-sensitive capacity for $r>1$ is infinite, observe that any radial Hamiltonian that is constantly zero on $D_1\Sigma$ has no contractible periodic orbits, as curves in $\C\mathrm{H}^1$ of constant geodesic curvature less than 1 are not periodic.
\\
\ \\
\noindent
\textbf{Outline.}\\
In the second section we modify the magnetic geodesic flow to obtain a semifree Hamiltonian circle action and in the third section we use this circle action to prove the theorem and its corollary.\\
\ \\
\noindent
\textbf{Acknowledgments.}\\
I want to thank Gabriele Benedetti and Kai Zehmisch for their valuable feedback and input on the topic. This work is part of a project in the SFB/TRR 191
{\it Symplectic Structures in Geometry, Algebra and Dynamics},
funded by the DFG. The author acknowledges funding by the Deutsche Forschungsgemeinschaft (DFG, German Research Foundation) – 281869850 (RTG 2229), 390900948 (EXC-2181/1) and 281071066 (TRR 191).

\section{Magnetic geodesic flow}
The circle action we are going to construct will be a reparametrization of the magnetic geodesic flow. Lets first work on the universal cover of $\Sigma$, hence $\cp^1,\C^1,\C\mathrm{H}^1$ depending on the sign of the curvature. As for example shown in \cite{BR19} magnetic geodesics on on these spaces are curves of constant geodesic curvature $\kappa_g=\vert \frac{s}{v}\vert$. If $R$ denotes the radius (with respect to the Riemannian metric $g$) of a geodesic circle we know, using normal polar coordinates, that its circumference $C$ and the geodesic curvature $\kappa_g$ are

\begin{align}\label{e5}
   C&=\frac{2\pi}{\sqrt{\kappa}}\sin(\sqrt{\kappa}R)=\frac{2\pi\sqrt{\kappa}^{-1}\tan(\sqrt{\kappa}R)}{\sqrt{1+(\tan(\sqrt{\kappa}R))^2}},\\ 
   \kappa_g&=\frac{\sqrt{\kappa}}{\tan(\sqrt{\kappa}R)}.\label{e4}
\end{align}
Here we use the convention $\sqrt{-1}=i$ and the formulas $-i\sin(ix)=\sinh(x)$, $\cos(ix)=\cosh(x)$. Observe that in the hyperbolic case the geodesic curvature of geodesic circles can not be less than $\sqrt{\vert\kappa\vert}$. Indeed curves of geodesic curvature less than $\sqrt{\vert\kappa\vert}$ do not close up.
We therefore restrict to the regime of strong magnetic field, i.e.\ $s^2+\kappa \vert v\vert^2>0$. See Figure \ref{fig8} for a visualisation.
\begin{figure}[]
	\centering
 \includegraphics[width=1\textwidth]{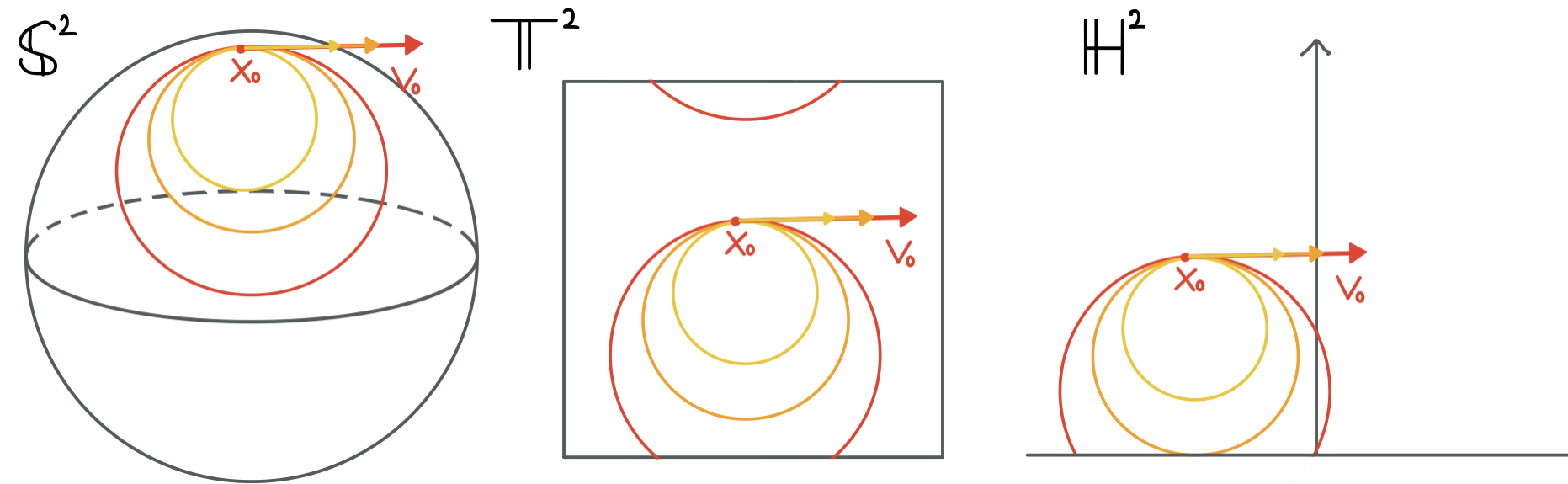}
	\caption{\textit{The picture shows families of geodesic circles. The vectors $v_0$ of different length indicate that the corresponding magnetic geodesic is a parametrized geodesic circle of geodesic curvature $\kappa_g=s/\vert v_0\vert$.}}
    \label{fig8}
\end{figure}
\noindent
Inserting $\kappa_g$ into \eqref{e5} yields
$$
C=\frac{2\pi}{\kappa_g\sqrt{1+\kappa/\kappa_g^2}}=\frac{2\pi\vert v\vert}{\sqrt{s^2+\kappa\vert v\vert^2}},
$$
where in the last step we inserted $\kappa_g=s/\vert v\vert$. Now, we conclude that the period is given by
$$
T=\frac{C}{\vert v\vert}=\frac{2\pi}{\sqrt{s^2+\kappa\vert v\vert^2}}.
$$
In particular the reparametrization $H=h\circ E$ with 
$$
h:\R_{\geq 0}\to\R;\ \  h(E)= \frac{2\pi}{\kappa}\left( \sqrt{s^2+2\kappa E}-\vert s\vert \right)
$$
induces a Hamiltonian $S^1$-action (of period $T=1)$. Observe that the induced circle action is semifree as all, but the constant orbits, have period 1. If we now consider arbitrary Riemannian surfaces of constant curvature, it is not clear that the induced circle action is still semifree. This is the statement of the following proposition.
\begin{Lemma}\label{thm4}
Let $(\Sigma,g,j)$ be an orientable Riemann surface of constant sectional curvature $\kappa$. Then for constants $s\in\R\setminus\lbrace 0\rbrace$ and $ r>0$ satisfying $s^2+\kappa r^2>0$ the Hamiltonian 
$$
H: D_r M\to \R;\ (x,v)\mapsto \frac{2\pi}{\kappa}\left(\sqrt{s^2+\kappa\vert v\vert^2}-\vert s\vert\right)
$$
generates a semifree Hamiltonian circle action on the disc-subbundle $(D_r M, \omega_s)$ of the magnetically twisted tangent bundle. 
\end{Lemma}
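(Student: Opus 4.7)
The plan is to combine the universal-cover computation preceding the lemma with a descent argument to $M$. Since $s^{2}+\kappa r^{2}>0$, the quantity $s^{2}+\kappa|v|^{2}$ stays positive on $\overline{D_rM}$, so $H=h\circ E$ with $E=\tfrac12|v|^{2}$ is smooth, and its Hamiltonian vector field satisfies $X_{H}=h'(E)\,X_{E}$ with $h'(E)=2\pi/\sqrt{s^{2}+2\kappa E}>0$. Hence $X_{H}$ and $X_{E}$ share the same unparametrized orbits and $X_{H}$ vanishes exactly on the zero section (where $dE=0$); that zero section is the candidate fixed-point set of the $S^{1}$-action.

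Next I lift to the universal cover $\widetilde M\in\{\cp^{1},\C,\C\mathrm{H}^{1}\}$. By the computation above, a magnetic geodesic of speed $|v|$ on $\widetilde M$ is a geodesic circle of period $T(E)=2\pi/\sqrt{s^{2}+2\kappa E}$, traversed exactly once per period. Our choice of $h$ gives $h'(E)=T(E)$, so the flow of $X_{H}$ is $1$-periodic on every positive level $\{E=c\}$ in $T\widetilde M$ with minimal period equal to $1$, since the underlying orbit is a once-traversed embedded loop. On the zero section the orbit is constant. Pushing down, every $X_{H}$-orbit in $D_rM$ is periodic of period dividing $1$, so the flow defines a smooth $S^{1}$-action on $(D_rM,\omega_s)$ with fixed-point set the zero section.

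The only subtle step is to rule out stabilizers $\Z/n\Z$ with $n>1$ on the non-fixed orbits. Suppose a non-constant orbit $\gamma\subset D_rM$ has minimal period $1/n$. Lifting $\gamma|_{[0,1/n]}$ to a path $\tilde\gamma$ in $D_r\widetilde M$ and iterating, the deck transformation $\phi\in\pi_{1}(M)$ with $\tilde\gamma(1/n)=\phi(\tilde\gamma(0))$ satisfies $\tilde\gamma(t+k/n)=\phi^{k}(\tilde\gamma(t))$ for all $t,k$. Since the universal-cover orbit has minimal period $1$, this forces $\phi^{n}=\mathrm{id}$ with $\phi\neq\mathrm{id}$. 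But in all three cases $M\in\{S^{2},\,\T^{2},\,\Sigma_{g}\ \text{with}\ g\geq 2\}$ the group $\pi_{1}(M)$ is torsion-free, which is a contradiction. Hence every non-constant orbit has minimal period exactly $1$, and the circle action is semifree. The main obstacle is precisely this last descent step: one must convert the period statement from the simply-connected model to $M$, which requires the torsion-freeness of $\pi_{1}(M)$ in each of the three curvature regimes.
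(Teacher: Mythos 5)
Your argument is correct and follows essentially the same route as the paper: the computation on the universal cover gives a $1$-periodic flow with minimal period $1$ away from the zero section, and the remaining work is to rule out that the covering projection shortens periods, i.e.\ that a lifted orbit covers its image with degree $n>1$. The only (cosmetic) difference is the final contradiction: you invoke torsion-freeness of $\pi_1(\Sigma)$ applied to the deck transformation $\phi$ with $\phi^n=\mathrm{id}$, whereas the paper observes directly that such a deck transformation would be a rotation fixing the center of the geodesic circle, contradicting freeness of the deck action --- two one-line finishes of the same descent step.
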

\begin{proof} If $\Sigma$ is simply connected we are done by the previous computations. If $\Sigma$ is not simply connected we know that $\kappa\leq 0$ and $\Sigma=\Tilde{\Sigma}/\Gamma$ for a discrete subgroup $\Gamma$ of isometries acting freely on the universal covering $\Tilde{\Sigma}\in\lbrace \C^1,\C\mathrm{H}^1\rbrace$. We need to make sure that the restriction of the projection $\Tilde{\Sigma}\to \Sigma$ to any magnetic geodesic $\Tilde{\gamma}\to \gamma$ is no covering of degree $>1$. The prove goes by contradiction. Assume $\Tilde{\gamma}$ was covering $\gamma$ with some degree $>1$, then there must be an element $g\in \Gamma$ that is a rotation around the center of $\Tilde{\gamma}$. In particular $g$ fixes the center of $\Tilde{\gamma}$, which yields a contradiction as $\Gamma$ acts freely on $\Tilde{\Sigma}$.
\end{proof}

\section{Proof of Theorem}
In the previous section we proved that for $r>0$, satisfying $s^2+\kappa r^2>0$, the Hamiltonian
$$
H:D_{r}\Sigma\to \R; \ H(x,v)=\frac{2\pi}{\kappa}\left(\sqrt{s^2+\kappa \vert v\vert^2}-\vert s\vert\right)
$$
is well-defined, smooth and generates a semi-free $S^1$-action. We can now show that the oscillation of this Hamiltonian yields both a lower and an upper bound for the Hofer--Zehnder capacity and thus determines it.\\
\ \\
\noindent
\textbf{Lower bound:}\\
We modify the Hamiltonian $H$ generating the circle action slightly so that it becomes admissible. This can be done with the help of a function
    $f:[0,\max H]\to [0,\infty)$ satisfying
    $$
    \begin{aligned}
    &0\leq f'(x)< 1, \\
    &f(x)=0\ \  \text{near}\ \ 0,\\
    &f(x)=\max H-\varepsilon\ \  \text{near}\ \ \max H.
    \end{aligned}
    $$
    Then all solutions to the Hamiltonian system with Hamiltonian $\tilde H=f\circ H$ have period 
    $$
    T=\frac{1}{f'(E)}> 1.
    $$
    Thus $\tilde H$ is admissible and we find the estimate 
    $$
    c_{\hz}(D_rM,\omega_s)\geq \max(\tilde H)=\max(H)-\varepsilon,\ \ \ \ \forall \varepsilon>0.
    $$
\noindent
\textbf{Upper bound:}\\
We do a Lerman-cut (\cite{ler}) at the regular energy surface $\lbrace \vert v\vert=r\rbrace$ to compactify $(D_r\Sigma, \omega_s)$. The compactification $(\overline{D_r\Sigma, \omega_s)}$ is a closed symplectic 4-manifold with semi-free Hamiltonian circle action. The critical set, where the Hamiltonian attains its minimum, corresponds to the zero-section and is thus of codimension two. By \cite[Prop.\ 4.3]{Duff09} the 1-point Gromov-Witten invariant $\mathrm{GW}_A([pt.])$ in the class $A\in H_2(\overline{D_r\Sigma},\Z)$ of a gradient sphere $u(s,t)=\Phi^t_H\gamma(s)$ ($A=[u]$) is non-vanishing\footnote{Here $\gamma$ denotes a gradient flow line with respect to some compatible metric.}. Further the homology class represented by the divisor $D_\infty$ obtained from collapsing the boundary is proportional to the Poincaré-dual of $[\bar\omega]$. Thus 
$$
\mathrm{GW}_A([pt.],[D_\infty],[D_\infty])=\mathrm{GW}_A([pt.])\left (A\cdot[D_\infty]\right)^2\neq 0.
$$
This means we can apply Lu's theorem \cite[Thm.\ 1.10]{Lu06} to obtain as upper bound
\begin{align*}
    c_{\hz}(D_r\Sigma,\dd\lambda-s\pi^*\sigma)&=c_{\hz}(\overline{D_r\Sigma}\setminus D_\infty,\bar\omega)\\
    &\leq \omega(A)=\int_{\cp^1} u^*\omega=\int_{-\infty}^\infty \dd s\int_0^1\dd t\ \omega(\partial_s u,\partial_tu)\\
    &=\int_{-\infty}^\infty \dd s\int_0^1\dd t\ \dd H(\dot\gamma(t))= \frac{2\pi}{\kappa}\left(\sqrt{s^2+\kappa r^2}-\vert s\vert\right).
\end{align*}
\ \\
\noindent
\textbf{Other types of the Hofer--Zehnder capacity:}\\
    Observe that Lu's theorem actually yields an upper bound for the $\pi_1$-sensitive Hofer--Zehnder capacity. Further the Hamiltonian $\tilde H$ used for the lower bound is constant along the zero-section, thus $\max\tilde H$ also bounds the relative Hofer-Zehnder from below. In total we obtain
    $$
    c_{\hz}(D_r\Sigma,\Sigma,\omega_s)=c_{\hz}(D_r\Sigma,\omega_s)=c_{\hz}^0(D_r\Sigma,\omega_s).
    $$
    As all orbits of $\tilde H$ are contractible we can further conclude that 
    $$
    c_{\hz}^\nu(D_r\Sigma,\omega_s)=\infty
    $$ for any $\nu\neq 0$. 
    All these statements hold only under the condition $s\neq 0$ and $s^2+\kappa r^2>0$. The cases not covered by this condition are the sphere ($\kappa>0$) and tori ($\kappa=0$) with vanishing magnetic field (s=0) and hyperbolic surfaces ($\kappa<0$) with weak magnetic field. As shown in \cite{B23} the theorem and the corollary actually hold in the case of spheres with vanishing magnetic field. In the other cases it is only clear that 
    $$
    c_{\hz}^0(D_r\Sigma,\omega_s)=\infty
    $$
    as the magnetic geodesic flow has no contractible periodic orbits on high energy levels. Further for tori finiteness of the full Hofer--Zehnder capacity is known (\cite[Prop.4, Ch.4]{HZ94}), but for hyperbolic surfaces even finiteness is unknown.
\newpage
\bibliographystyle{abbrv}
\bibliography{ref}

\end{document}